\newtheorem{theorem}{Theorem}
\newtheorem{corollary}{Corollary}
\newtheorem{proposition}{Proposition}
\theoremstyle{definition}
\theoremstyle{remark}
\def\Xint#1{\mathchoice
{\XXint\displaystyle\textstyle{#1}}%
{\XXint\textstyle\scriptstyle{#1}}%
{\XXint\scriptstyle\scriptscriptstyle{#1}}%
{\XXint\scriptscriptstyle\scriptscriptstyle{#1}}%
\!\int}
\def\XXint#1#2#3{{\setbox0=\hbox{$#1{#2#3}{\int}$ }
\vcenter{\hbox{$#2#3$ }}\kern-.6\wd0}}
\def\dashint{\Xint-}
\begin{document}

\title{A Fourier restriction theorem based on convolution powers}
\author{Xianghong Chen}
\address{X. Chen\\Department of Mathematics\\University of Wisconsin-Madison\\Madison, WI 53706, USA}
\curraddr{}
\email{xchen@math.wisc.edu}
\thanks{This research was supported in part by NSF grant 0652890.}

\subjclass[2010]{Primary 42B10, 42B99}
\keywords{Fourier restriction, convolution powers}
\dedicatory{}
\commby{}

\begin{abstract}
We prove a Fourier restriction estimate under the assumption that certain convolution power of the measure admits an $r$-integrable density.
\end{abstract}
\maketitle

\section*{Introduction}
Let $\mathcal F$ be the Fourier transform defined on the Schwartz space by
$$\hat f(\xi)=\int_{\mathbb R^d}e^{-2\pi i \langle\xi,x\rangle}f(x)dx$$
where $\langle\xi,x\rangle$ is the Euclidean inner product. We are interested in Borel measures $\mu$ defined on $\mathbb R^d$ for which $\mathcal F$ maps $L^p(\mathbb R^d)$ boundedly to $L^2(\mu)$; i.e.
\begin{equation}\label{restriction}
\|\hat f\|_{L^2(\mu)}\lesssim \|f\|_{L^p(\mathbb R^d)}, \forall f\in\mathcal S(\mathbb R^d).
\end{equation}
Here ``$\lesssim$'' means the left-hand side is bounded by the right-hand side multiplied by a positive constant that is independent of $f$.

If $\mu$ is a singular measure, then such result can be interpreted as a restriction property of the Fourier transform. Such restriction estimates for singular measures were first obtained by Stein in the 1960's. If $\mu$ is the surface measure on the sphere, the Stein-Tomas theorem \cite{tomas}, \cite{stein} states that \eqref{restriction} holds for $1\le p\le \frac{2(d+1)}{d+3}$. Mockenhaupt \cite{mockenhaupt} and Mitsis \cite{mitsis} have shown that Tomas's argument in \cite{tomas} can be used to obtain an $L^2$-Fourier restriction theorem for a general class of finite Borel measures satisfying
\begin{equation}\label{fourier}
|\hat{\mu}(\xi)|^2\lesssim |\xi|^{-\beta}, \forall \xi\in\mathbb R^d
\end{equation}
\begin{equation}\label{ahlfors}
\mu(B(x,r))\lesssim r^\alpha, \forall x\in\mathbb R^d, r>0
\end{equation}
where $0<\alpha, \beta<d$; they showed that \eqref{restriction} holds for $1\le p< p_0=\frac{4(d-\alpha)+2\beta}{4(d-\alpha)+\beta}$. Bak and Seeger \cite{bak-seeger} proved the same result for the endpoint $p_0$ and further strengthened it by replacing the $L^{p_0}$-norm with the $L^{p_0,2}$-Lorentz norm.

It is well known that if $\mu$ is the surface measure on a compact $C^\infty$ manifold then the sharpness can be tested by some version of Knapp's homogeneity argument. See e.g. the work by Iosevich and Lu \cite{iosevich-lu} who proved that if $\mu$ is the surface measure on a compact hypersurface and if $\mathcal F: L^{p_0}\to L^2(\mu)$, $p_0=\frac{2(d+1)}{d+3}$, then the Fourier decay assumption \eqref{fourier} is satisfied with $\alpha=d-1$. For general measures satisfying \eqref{fourier} and \eqref{ahlfors}, there is no Knapp's argument available to prove the sharpness of $p_0$. Here we show that indeed for certain measures the restriction estimate \eqref{restriction} holds in a range of $p$ beyond the range given above. This will follow from a restriction estimate based on an assumption on the $n$-fold convolution $\mu^{*n}=\mu*\dots*\mu$.

\begin{theorem}\label{theorem}
Let $\mu$ be a Borel probability measure on $\mathbb R^d$, let $1\le r\le \infty$ and assume that $\mu^{*n}\in L^r(\mathbb R^d)$.
Let $1\le p\le \frac{2n}{2n-1}$, if $r\ge 2$ and $1\le p\le \frac{nr'}{nr'-1}$, if $1\le r\le 2$, and let $1\le q\le\frac{p'}{nr'}$. Then
\begin{equation}\label{q-restriction}
\|\hat f\|_{L^q(\mu)} \lesssim  \|f\|_{L^p(\mathbb R^d)}, \forall f\in \mathcal S(\mathbb R^d).
\end{equation}
\end{theorem}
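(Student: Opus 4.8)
The plan is to dualize to an extension estimate, raise that estimate to the $n$-th power so that the convolution hypothesis enters through Hausdorff--Young, and finally reduce the resulting bound on a weighted convolution to a single application of H\"older's inequality on the $n$-fold product space. I expect the last step to be the crux.

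First I would reduce to the endpoint exponent $q=\frac{p'}{nr'}$. Since $\mu$ is a probability measure, $\|\cdot\|_{L^q(\mu)}\le\|\cdot\|_{L^{q_0}(\mu)}$ whenever $q\le q_0$, so it suffices to treat $q=q_0:=\frac{p'}{nr'}$; the requirement $q_0\ge1$ is exactly the constraint $p\le\frac{nr'}{nr'-1}$ appearing when $r\le2$. By duality, \eqref{q-restriction} at $q=q_0$ is equivalent to the extension estimate
\begin{equation}\label{plan-ext}
\|(g\,d\mu)^{\vee}\|_{L^{p'}(\mathbb R^d)}\lesssim\|g\|_{L^{q'}(\mu)},
\end{equation}
where $(g\,d\mu)^{\vee}(x)=\int e^{2\pi i\langle\xi,x\rangle}g(\xi)\,d\mu(\xi)$ is the adjoint of $f\mapsto\hat f|_{\mathrm{supp}\,\mu}$.

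The key observation is that $(g\,d\mu)^{\vee}$ linearizes under taking $n$-th powers: since the inverse transform turns convolution into multiplication, $\big((g\,d\mu)^{\vee}\big)^n=\big((g\,d\mu)^{*n}\big)^{\vee}$. Hence $\|(g\,d\mu)^{\vee}\|_{p'}^n=\big\|\big((g\,d\mu)^{*n}\big)^{\vee}\big\|_{p'/n}$. Because $p\le\frac{2n}{2n-1}$ forces $p'/n\ge2$, the dual exponent $(p'/n)'$ lies in $[1,2]$ and Hausdorff--Young applies, giving
\begin{equation}\label{plan-hy}
\|(g\,d\mu)^{\vee}\|_{p'}^n\le\big\|(g\,d\mu)^{*n}\big\|_{(p'/n)'}.
\end{equation}
Thus everything is reduced to an $L^{(p'/n)'}$ bound for the weighted convolution $(g\,d\mu)^{*n}$.

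For this last bound I would argue by duality: for $h\in L^{p'/n}$ write $\int h\,d(g\,d\mu)^{*n}=\int_{(\mathbb R^d)^n}h(\xi_1+\cdots+\xi_n)\prod_{j}g(\xi_j)\,d\mu^{\otimes n}$ and apply H\"older in the probability space $\big((\mathbb R^d)^n,\mu^{\otimes n}\big)$ with exponents $(q,q')$. The factor carrying $g$ contributes $\big(\int\prod_j|g(\xi_j)|^{q'}d\mu^{\otimes n}\big)^{1/q'}=\|g\|_{L^{q'}(\mu)}^n$, while the factor carrying $h$ equals $\big(\int|h(x)|^{q}\,d\mu^{*n}(x)\big)^{1/q}$. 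Since $\mu^{*n}\in L^r$, one more application of H\"older bounds the latter by $\|\mu^{*n}\|_r^{1/q}\|h\|_{qr'}$, and the identity $qr'=p'/n$ (which is precisely $q=\frac{p'}{nr'}$) turns $\|h\|_{qr'}$ into $\|h\|_{p'/n}$. Combining with \eqref{plan-hy} yields \eqref{plan-ext} with implicit constant $\|\mu^{*n}\|_r^{1/(nq)}$. The main point to get right is this final estimate: the two H\"older applications must be arranged so that the weight $\mu^{*n}$ is used exactly once and the exponents close up as dictated by $q=\frac{p'}{nr'}$. One should also note that the displayed duality bound shows a priori that $(g\,d\mu)^{*n}$ is absolutely continuous with density in $L^{(p'/n)'}$, which is what legitimizes the use of Hausdorff--Young in \eqref{plan-hy}.
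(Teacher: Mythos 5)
Your argument is correct and shares its skeleton with the paper's: reduce to the endpoint $q=\frac{p'}{nr'}$, dualize to an extension estimate, use $\big((g\,d\mu)^{\vee}\big)^n=\big((g\,d\mu)^{*n}\big)^{\vee}$ together with Hausdorff--Young at $s=p'/n\ge 2$, and spend the hypothesis $\mu^{*n}\in L^r$ in a H\"older step. The two places where you genuinely depart are worth recording. First, the paper mollifies ($\mu_\epsilon=\phi_\epsilon*\mu$), proves the estimate for these smooth densities uniformly in $\epsilon$ (Young's inequality gives $\|\mu_\epsilon^{*n}\|_r\le\|\mu^{*n}\|_r$), and concludes by weak convergence; you work with the measure itself, which requires knowing a priori that $(g\,d\mu)^{*n}$ has a density in $L^{s'}$ before Hausdorff--Young may be invoked. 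Your observation that the duality bound itself supplies this is correct for $p>1$ (test on $h\in C_c$, which is dense in $L^{s}$ since $s<\infty$); at $p=1$ that mechanism fails ($s=\infty$), but that case is trivial, or one can instead use the domination $|(g\,d\mu)^{*n}|\le\|g\|_\infty^n\,\mu^{*n}$, which is absolutely continuous by hypothesis. Second, the paper bounds $\|(g\mu_\epsilon)^{*n}\|_{s'}$ directly, writing the convolution as an iterated integral and applying H\"older twice --- once in the inner variables against the weight $M_\epsilon$ and once in the outer variable against $\mu_\epsilon^{*n}$, with the identity $\frac{1}{s'}-\frac{1}{qr}=\frac{1}{q'}$ closing the exponents --- whereas you prove the same bound in dual form with a single H\"older on the probability space $\big((\mathbb R^d)^n,\mu^{\otimes n}\big)$, the pushforward identity $\int|h(\xi_1+\cdots+\xi_n)|^q\,d\mu^{\otimes n}=\int|h|^q\,d\mu^{*n}$ playing the role of the paper's Fubini step and $qr'=s$ being the only exponent identity needed. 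Your arrangement buys transparent exponent bookkeeping and no limiting argument; the paper's buys that every object in sight is a genuine function, so no measure-theoretic care is needed anywhere.
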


Apply Theorem \ref{theorem} with $n=2$, $r=\infty$, one obtains the following.

\begin{corollary}\label{corollary}
Let $\mu$ be a Borel probability measure on $\mathbb R^1$ such that $\mu*\mu\in L^\infty(\mathbb R^1)$. Then \eqref{restriction} holds for $1\le p\le 4/3$.
\end{corollary}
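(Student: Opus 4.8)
The plan is to prove the theorem (the corollary being the special case $n=2$, $r=\infty$) by passing to the dual extension formulation and then exploiting the convolution hypothesis by raising the extension operator to the $n$-th power. Writing $Rf=\hat f|_{\mathrm{supp}\,\mu}$, the estimate \eqref{q-restriction} is equivalent, by duality between $L^q(\mu)$ and $L^{q'}(\mu)$ and between $L^p$ and $L^{p'}$, to the adjoint bound
\[
\|\widehat{g\,d\mu}\|_{L^{p'}(\mathbb R^d)} \lesssim \|g\|_{L^{q'}(\mu)}, \qquad g \in L^{q'}(\mu).
\]
Since $\mu$ is a probability measure, $L^{q_0}(\mu)\hookrightarrow L^{q}(\mu)$ for $q\le q_0$, so it suffices to treat the endpoint $q=\frac{p'}{nr'}$; I will also assume $g\ge 0$, which is harmless because the extension of $g$ dominates $|\widehat{g\,d\mu}|$ after the next step.

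First I would raise the extension to the power $n$ and use that the Fourier transform turns convolution into multiplication:
\[
\|\widehat{g\,d\mu}\|_{L^{p'}}^{\,n} = \big\|(\widehat{g\,d\mu})^{n}\big\|_{L^{p'/n}} = \big\|\widehat{(g\,d\mu)^{*n}}\big\|_{L^{p'/n}}.
\]
Because the hypotheses force $p'/n\ge 2$ (this is exactly the constraint $p\le\frac{2n}{2n-1}$ when $r\ge2$, and is implied by $p\le\frac{nr'}{nr'-1}$ when $r\le2$, since then $p'/n\ge r'\ge2$), Hausdorff--Young applies and gives
\[
\big\|\widehat{(g\,d\mu)^{*n}}\big\|_{L^{p'/n}} \le \big\|(g\,d\mu)^{*n}\big\|_{L^{a}}, \qquad a=\Big(\tfrac{p'}{n}\Big)'.
\]
Thus the whole problem reduces to a convolution estimate for the $n$-fold self-convolution of the measure $g\,d\mu$.

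The heart of the matter is to bound $\|(g\,d\mu)^{*n}\|_{L^a}$ using only $\mu^{*n}\in L^r$ and $\|g\|_{L^{q'}(\mu)}$. I would obtain this by multilinear interpolation applied to the $n$-linear map $T(g_1,\dots,g_n)=(g_1\,d\mu)*\cdots*(g_n\,d\mu)$, between two endpoints: the $L^\infty$ endpoint
\[
\|T(g_1,\dots,g_n)\|_{L^r}\le \|\mu^{*n}\|_{L^r}\prod_i\|g_i\|_{L^\infty(\mu)},
\]
which follows from the pointwise domination $(g\,d\mu)^{*n}\le\|g\|_{\infty}^{\,n}\,\mu^{*n}$ for $g\ge0$ by monotonicity of convolution of nonnegative measures, and the $L^1$ endpoint $\|T(g_1,\dots,g_n)\|\le\prod_i\|g_i\|_{L^1(\mu)}$ (total mass). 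Interpolating at the parameter $\theta=1/q$ yields $\|(g\,d\mu)^{*n}\|_{L^a}\le\|\mu^{*n}\|_{L^r}^{1/q}\|g\|_{L^{q'}(\mu)}^{\,n}$, and a direct check shows the exponents match: the interpolated target satisfies $a'=q r'$, which agrees with $a'=p'/n$ exactly when $q=\frac{p'}{nr'}$. Combining the three displays gives $\|\widehat{g\,d\mu}\|_{L^{p'}}\le\|\mu^{*n}\|_{L^r}^{1/(nq)}\|g\|_{L^{q'}(\mu)}$, the desired dual estimate.

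I expect the main obstacle to be this convolution/interpolation step, for two reasons. First, the $L^1$ endpoint of $T$ lands in the space of finite measures rather than in $L^1(\mathbb R^d)$, so the multilinear interpolation has to be justified carefully; I would handle this by first mollifying $\mu$ (replacing it by $\mu*\phi_\varepsilon$), proving all bounds with constants uniform in $\varepsilon$ via $\|(\mu*\phi_\varepsilon)^{*n}\|_{L^r}\le\|\mu^{*n}\|_{L^r}$, and then passing to the limit, or alternatively by recasting the interpolation as a direct estimate for the $(n+1)$-linear form obtained after pairing $(g\,d\mu)^{*n}$ with a test function in $L^{a'}$. Second, the exponent bookkeeping must be organized so that the two regimes $r\ge2$ and $r\le2$ in the statement emerge from the single condition $p\le\min\{\frac{2n}{2n-1},\frac{nr'}{nr'-1}\}$, the first bound coming from Hausdorff--Young and the second from the requirement $q\ge1$ in the interpolation. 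Once these points are settled, the corollary follows at once on taking $n=2$, $r=\infty$.
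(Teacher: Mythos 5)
Your proposal is correct, and its outer skeleton coincides with the paper's: reduce to the endpoint $q=\frac{p'}{nr'}$ using that $\mu$ is a probability measure, dualize to the extension estimate, write $\|\widehat{g\,d\mu}\|_{p'}^n=\|\widehat{(g\,d\mu)^{*n}}\|_{p'/n}$, and apply Hausdorff--Young, which is legitimate precisely because $p'/n\ge 2$ in both regimes of $r$, as you check. The difference lies in how the key convolution bound $\|(g\,d\mu)^{*n}\|_{L^{(p'/n)'}}\le\|\mu^{*n}\|_r^{1/q}\|g\|_{L^{q'}(\mu)}^n$ is established: you invoke multilinear Riesz--Thorin interpolation between the endpoints $(L^\infty(\mu))^n\to L^r$ and $(L^1(\mu))^n\to$ finite measures, whereas the paper proves this inequality by hand, writing $(g\mu_\epsilon)^{*n}(\xi)=\int G(\xi,\eta)M_\epsilon(\xi,\eta)\,d\eta$ and applying H\"{o}lder twice --- once in $\eta$ after splitting $M_\epsilon=M_\epsilon^{1/q}M_\epsilon^{1/q'}$, producing the factor $(\mu_\epsilon^{*n}(\xi))^{1/q}$, and once in $\xi$, producing $\|\mu_\epsilon^{*n}\|_r^{1/q}$. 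The paper's double H\"{o}lder is in effect a self-contained, real-variable proof of exactly the interpolation inequality you cite (for positive kernels), so the two arguments are equivalent in substance; yours is more conceptual but imports a black box and, as you rightly flag, needs care at the $L^1$ endpoint, where the output is a measure rather than an $L^1$ function. Your first proposed repair --- mollify to $\mu_\epsilon=\phi_\epsilon*\mu$, prove all bounds uniformly in $\epsilon$ via Young's inequality $\|\mu_\epsilon^{*n}\|_r\le\|\mu^{*n}\|_r$, then pass to the limit --- is precisely the regularization the paper performs, except the paper takes the limit on the restriction side, using weak convergence $\int|\hat f|^q\mu_\epsilon\,d\xi\to\int|\hat f|^q\,d\mu$, which is slightly cleaner than your plan of proving the dual estimate for $\mu$ itself (that requires Fatou plus a density argument in $g$). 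Two cosmetic points: your parameter $\theta=1/q$ must be read as the weight on the $L^\infty\to L^r$ endpoint (equivalently $1/q'$ on the $L^1$ endpoint), and with that convention your bookkeeping --- $a'=qr'=p'/n$ at the endpoint, Hausdorff--Young binding for $r\ge 2$ and $q\ge1$ binding for $r\le 2$ --- is accurate. Incidentally, the paper's closing remarks record a referee's observation that Corollary \ref{corollary} also follows from the bilinear bound $\|f\mu*g\mu\|_{L^p}\le\|\mu*\mu\|_\infty^{1/p'}\|f\|_{L^p(\mu)}\|g\|_{L^p(\mu)}$, obtained by interpolating the cases $p=1,\infty$; that is essentially the $n=2$, $r=\infty$ instance of your interpolation step, so your route is well corroborated.
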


\noindent{\it Remarks.}
(i) It is not easy to construct measures supported on lower dimensional sets for which Corollary \ref{corollary} applies. Remarkably, K\"{o}rner showed by a combination of Baire category and probabilistic argument that there exist ``many'' Borel probability measures $\mu$ supported on compact sets of Hausdorff dimension 1/2 so that $\mu*\mu\in C_c(\mathbb R^1)$.

(ii) In Corollary \ref{corollary}, since $\mu*\mu$ satisfies \eqref{ahlfors} with $\alpha=1$, $\mu$ satisfies \eqref{ahlfors} with $\alpha=1/2$ (see Proposition \ref{convolution-ahlfors}). Suppose $\mu$ is supported on a compact set of Hausdorff dimension $\gamma$. It follows that $\gamma\ge 1/2$ (cf. \cite{wolff}, Proposition 8.2). Furthermore, if $\gamma< 1$, then $\beta$ and $\alpha$ in \eqref{fourier} and \eqref{ahlfors} can not exceed $\gamma$ (cf. \cite{wolff}, Corollary 8.7).

(iii) Under the above situation, since $\alpha,\beta\le\gamma$, the range of $p$ in \eqref{restriction} obtained from \cite{mockenhaupt}, \cite{mitsis}, \cite{bak-seeger} is no larger than $1\le p\le\frac{6-4\epsilon}{5-6\epsilon}$ where $\epsilon=\gamma-1/2$, while Corollary \ref{corollary} gives the range $1\le p\le 4/3$, which is an improvement if $\gamma<2/3$. However, we do not know any example of such a measure $\mu$ with $\beta$ (and $\alpha$) close to $\gamma$.

(iv) Suppose $\mu$ is as in Corollary \ref{corollary} and supported on a compact set of Hausdorff dimension 1/2. By Theorem \ref{theorem}, the restriction estimate \eqref{q-restriction} holds for $1\le p\le 4/3$, $1\le q\le p'/2$. By dimensionality considerations (see Proposition \ref{average-fourier} and Proposition \ref{knapp}), these are all the possible exponents $1\le p,q\le\infty$ for which \eqref{q-restriction} holds.
\section*{Proof of theorem \ref{theorem}}
The proof proceeds in a similar spirit as in \cite{rudin}, \cite{fefferman}. Fix a nonnegative function $\phi\in C^\infty_c(\mathbb R^d)$ that satisfies $\int_{\mathbb R^d} \phi(\xi)d\xi=1$. Let $\phi_\epsilon(\xi)=\epsilon^{-d}\phi(\xi/\epsilon)$ and $\mu_\epsilon(\xi)=\phi_\epsilon*\mu(\xi)=\int_{\mathbb R^d} \phi_\epsilon(\xi-\eta)d\mu(\eta)$. Since $\mu_\epsilon$ converges weakly to $\mu$, we have
$$\lim_{\epsilon\rightarrow 0}\int_{\mathbb R^d} |\hat f(\xi)|^q\mu_\epsilon(\xi) d\xi= \int_{\mathbb R^d} |\hat f(\xi)|^q d\mu(\xi)$$
for all $f\in\mathcal S(\mathbb R^d)$. Thus it suffices to show
$$\|\hat f\|_{L^q(\mu_\epsilon)}\le C\|f\|_{L^p(\mathbb R^d)}$$
where $C$ is a constant independent of $f$ and $\epsilon$. By H\"{o}lder's inequality, we may assume $q=\frac{p'}{nr'}$. Set $s=p'/n$. Note that by our assumption on the range of $p$, $s\ge2, q\ge 1$. By duality, we need to prove
\begin{equation}\label{dual}
\Big(\int_{\mathbb R^d} |\widehat {g\mu_\epsilon}(x)|^{ns}dx\Big)^{1/ns}\le C \Big(\int_{\mathbb R^d}|g(\xi)|^{q'}\mu_\epsilon(\xi)d\xi\Big)^{1/q'}
\end{equation}
for all bounded Borel function $g$. By the Hausdorff-Young inequality,
\begin{align*}
\Big(\int_{\mathbb R^d} |\widehat {g\mu_\epsilon}(x)|^{ns}dx\Big)^{1/s}
&=\Big(\int_{\mathbb R^d} |\widehat {g\mu_\epsilon}^n(x)|^s dx\Big)^{1/s}\\
&\le\Big(\int_{\mathbb R^d} |g\mu_\epsilon*\cdots*g\mu_\epsilon(\xi)|^{s'} d\xi\Big)^{1/s'}\\
&=\Big(\int_{\mathbb R^d} |\int_{\mathbb R^{(n-1)d}}G(\xi,\eta)M_\epsilon(\xi,\eta)d\eta|^{s'}d\xi\Big)^{1/s'}
\end{align*}
where $\eta=(\eta_1,\dots,\eta_{d-1})$, $\eta_0\equiv\xi$,
\begin{align*}
G(\xi,\eta)&= g(\eta_{n-1})\prod_{j=1}^{n-1} g(\eta_{j-1}-\eta_j),\\
M_\epsilon(\xi,\eta)&= \mu_\epsilon(\eta_{n-1})\prod_{j=1}^{n-1} \mu_\epsilon(\eta_{j-1}-\eta_j).
\end{align*}
Now by H\"{o}lder's inequality for the inner integral,
\begin{align*}
&\ \ \ \ \Big(\int_{\mathbb R^d} |\int_{\mathbb R^{(n-1)d}}G(\xi,\eta)M_\epsilon(\xi,\eta)d\eta|^{s'}d\xi\Big)^{1/s'}\\
&\le \left(\int_{\mathbb R^d} \Big(\mu_\epsilon^{*n}(\xi)\Big)^{s'/q} \Big(\int_{\mathbb R^{(n-1)d}}|G(\xi,\eta)|^{q'}M_\epsilon(\xi,\eta)d\eta\Big)^{s'/q'} d\xi\right)^{1/s'}
\end{align*}
Apply H\"{o}lder's inequality again, this is bounded by
\begin{align*}
&\ \ \ \ \|\mu_\epsilon^{*n}\|^{1/q}_r \Big(\int_{\mathbb R^d}\int_{\mathbb R^{(n-1)d}}|G(\xi,\eta)|^{q'}M_\epsilon(\xi,\eta)d\eta d\xi\Big)^{\frac{1}{s'}-\frac{1}{qr}}\\
&= \|\mu_\epsilon^{*n}\|^{1/q}_r \Big(\int_{\mathbb R^d}|g(\xi)|^{q'}\mu_\epsilon(\xi)d\xi\Big)^{n(\frac{1}{s'}-\frac{1}{qr})}\\
&\le \|\mu^{*n}\|^{1/q}_r \Big(\int_{\mathbb R^d}|g(\xi)|^{q'}\mu_\epsilon(\xi)d\xi\Big)^{n(\frac{1}{s'}-\frac{1}{qr})}
\end{align*}
where we have used Young's inequality in the last line. Since $\frac{1}{s'}-\frac{1}{qr}=\frac{1}{q'}$, we obtain \eqref{dual} after taking the $n$th root. \qed
\section*{Appendix}
For the sake of completeness, we include the proofs of the claims made in the remarks. Similar results can be found in \cite{mockenhaupt} and \cite {mitsis}.

\begin{proposition}\label{convolution-ahlfors}
Let $\mu$ be a Borel probability measure on $\mathbb{R}^d$. If $\mu^{*n}$ satisfies \eqref{ahlfors} with $0\le \alpha\le d$, then $\mu$ satisfies \eqref{ahlfors} with exponent $\alpha/n$.
\end{proposition}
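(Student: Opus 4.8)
The plan is to relate the $\mu$-mass of a ball to the $\mu^{*n}$-mass of a dilated ball through a single pointwise inequality between indicator functions. First I would write the $n$-fold convolution as the image of the product measure $\mu\times\cdots\times\mu$ under the addition map, so that for every Borel set $E$,
$$\mu^{*n}(E)=\int_{\mathbb R^d}\cdots\int_{\mathbb R^d}\mathbf 1_E(y_1+\cdots+y_n)\,d\mu(y_1)\cdots d\mu(y_n).$$

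The key geometric observation is that if $y_j\in B(x,r)$ for each $j$, then $|y_1+\cdots+y_n-nx|\le\sum_{j=1}^n|y_j-x|<nr$, so that $y_1+\cdots+y_n\in B(nx,nr)$. This gives the pointwise bound
$$\mathbf 1_{B(nx,nr)}(y_1+\cdots+y_n)\ge\prod_{j=1}^n\mathbf 1_{B(x,r)}(y_j).$$
Integrating this inequality against the product measure, and using that the $n$ factors are independent copies of $\mu$, I obtain
$$\mu^{*n}(B(nx,nr))\ge\mu(B(x,r))^n.$$

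Finally I would invoke the hypothesis that $\mu^{*n}$ satisfies \eqref{ahlfors}, which yields $\mu(B(x,r))^n\le\mu^{*n}(B(nx,nr))\lesssim(nr)^\alpha=n^\alpha r^\alpha$; since $n$ is a fixed integer, taking $n$th roots absorbs the factor $n^{\alpha/n}$ into the implicit constant and produces $\mu(B(x,r))\lesssim r^{\alpha/n}$, uniformly in $x$ and $r$, which is exactly \eqref{ahlfors} for $\mu$ with exponent $\alpha/n$. I do not expect a serious obstacle: the only step requiring any care is the containment of the sumset of $n$ copies of $B(x,r)$ inside the single ball $B(nx,nr)$, together with the harmless observation that the resulting dilation factor depends only on the fixed $n$.
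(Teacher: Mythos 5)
Your proof is correct and rests on exactly the same key inequality as the paper's, namely $\mu^{*n}(B(x,r)+\cdots+B(x,r))\ge\mu(B(x,r))^n$ combined with the hypothesis \eqref{ahlfors} applied to the Minkowski sum, which is a ball of radius $nr$. The only difference is cosmetic: the paper phrases the argument as a proof by contradiction, while you argue directly (and also spell out the justification of the key inequality via the product-measure representation, which the paper simply asserts).
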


\begin{proof}
Assume to the contrary that given $k$, $\mu(B_{r_k})\ge k r_k^{\alpha/n}$ for some ball $B_{r_k}$ with radius $r_k>0$. Let $B^*_{nr_k}=B_{r_k}+\cdots+B_{r_k}$ be the $n$-fold Minkowski sum, then
$$\mu^{*n}(B^*_{nr_k})\ge \mu(B_{r_k})^n\ge k^nr_k^{\alpha}.$$
On the other hand, since $\mu^{*n}$ satisfies \eqref{ahlfors},
$$\mu^{*n}(B^*_{nr_k})\lesssim (nr_k)^{\alpha}\lesssim r_k^{\alpha}, \forall k.$$
Let $k\rightarrow\infty$, we obtain a contradiction.
\end{proof}

\begin{proposition}\label{average-fourier}
Let $\mu$ be a Borel probability measure on $\mathbb{R}^d$ supported on a compact set of Hausdorff dimension $0\le\gamma<d$, then
$$\|\hat{\mu}\|_{s}=\infty, \forall 0<s<\frac{2d}{\gamma}.$$
\end{proposition}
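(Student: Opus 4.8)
The plan is to connect the $L^s$ integrability of $\hat\mu$ with the finiteness of the Riesz energy
$$I_t(\mu) = \iint_{\mathbb{R}^d \times \mathbb{R}^d} |x-y|^{-t}\, d\mu(x)\, d\mu(y),$$
and then exploit the fact that a measure supported on a set of small Hausdorff dimension has divergent energy. First I would record the two standard inputs. Since $\mu$ is a probability measure supported on a compact set $E$ of Hausdorff dimension $\gamma$, Frostman's theorem (in the form of the mass distribution principle; see e.g. \cite{wolff}) gives $I_t(\mu) = \infty$ for every $t$ with $\gamma < t < d$, for otherwise $I_t(\mu) < \infty$ would force $\dim_H E \ge t > \gamma$. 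Second, for $0 < t < d$ one has the Fourier representation of the energy,
$$I_t(\mu) = c_{d,t} \int_{\mathbb{R}^d} |\hat\mu(\xi)|^2 |\xi|^{t-d}\, d\xi,$$
with $c_{d,t} > 0$, coming from the fact that the Riesz kernel $|x|^{-t}$ has Fourier transform a positive multiple of $|\xi|^{t-d}$.

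With these in hand I argue by contradiction, assuming $\hat\mu \in L^s$ for some $0 < s < 2d/\gamma$, and split into two ranges. When $s \le 2$, since $\|\hat\mu\|_\infty \le \mu(\mathbb{R}^d) = 1$, the bound $|\hat\mu|^2 \le \|\hat\mu\|_\infty^{2-s}|\hat\mu|^s$ shows $\hat\mu \in L^2$; by Plancherel this makes $\mu$ absolutely continuous with an $L^2$ density, which is impossible since $E$ has Lebesgue measure zero (being of dimension $\gamma < d$) while $\mu(E) = 1$. Note this case requires only $\gamma < d$.

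When $2 < s < 2d/\gamma$ I estimate the energy integral directly. Over $|\xi| \le 1$ the integrand $|\hat\mu|^2 |\xi|^{t-d}$ is controlled by $|\xi|^{t-d}$, which is integrable near the origin for every $t > 0$. Over $|\xi| > 1$ I apply H\"older's inequality with exponents $s/2$ and $s/(s-2)$:
$$\int_{|\xi|>1} |\hat\mu|^2 |\xi|^{t-d}\, d\xi \le \|\hat\mu\|_s^2 \Big(\int_{|\xi|>1} |\xi|^{(t-d)\frac{s}{s-2}}\, d\xi\Big)^{\frac{s-2}{s}},$$
and the remaining integral converges precisely when $t < 2d/s$. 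Since $s < 2d/\gamma$ gives $2d/s > \gamma$, while $2 < s$ gives $2d/s < d$, I may fix $t$ with $\gamma < t < 2d/s$; for this $t$ the two inputs clash, as the Fourier side yields $I_t(\mu) < \infty$ whereas Frostman gives $I_t(\mu) = \infty$. This contradiction closes the case.

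The routine parts are the two quoted facts (the mass distribution principle and the kernel computation), which I would cite rather than reprove. The point requiring care is the bookkeeping of exponents: one must check that the window $\gamma < t < 2d/s$ is nonempty exactly on the asserted range $s < 2d/\gamma$, and that the threshold $s = 2$, where the H\"older step degenerates, is correctly absorbed by the separate $L^2$-density argument. Beyond keeping these inequalities aligned I expect no serious obstacle.
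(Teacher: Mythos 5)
Your proposal is correct, but it takes a more self-contained route than the paper. The paper's proof is two lines: H\"older on ball averages gives $\dashint_{B(0,R)}|\hat\mu|^2\,d\xi \le \big(\dashint_{B(0,R)}|\hat\mu|^s\,d\xi\big)^{2/s} \lesssim R^{-2d/s}$, and it then cites Corollary 8.7 of \cite{wolff}, which packages exactly the statement that such average $L^2$ decay forces the dimension of the support to be at least $2d/s > \gamma$, a contradiction. You instead inline the content of that citation: you invoke the Fourier representation of the Riesz energy $I_t(\mu)=c_{d,t}\int|\hat\mu(\xi)|^2|\xi|^{t-d}\,d\xi$ together with the easy direction of Frostman's theorem (finite $t$-energy implies $\dim_H \ge t$), and apply H\"older directly to the weighted integral; your exponent bookkeeping is right, since $\int_{|\xi|>1}|\xi|^{(t-d)s/(s-2)}\,d\xi$ converges precisely when $t<2d/s$, and the window $\gamma<t<2d/s<d$ is nonempty exactly on the claimed range $2<s<2d/\gamma$. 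Since Wolff's Corollary 8.7 is itself proved via the energy formula, the two arguments belong to the same circle of ideas; the paper buys brevity by citing the packaged corollary, while yours is verifiable from two textbook facts. One further difference is at the threshold: the paper silently treats only $2<s<2d/\gamma$, implicitly relying on $\|\hat\mu\|_\infty\le 1$ to upgrade $L^s$-finiteness for $s\le 2$ to finiteness at some exponent in $(2,2d/\gamma)$; you handle $s\le 2$ explicitly via Plancherel (an $L^2$ density cannot charge a Lebesgue-null set, and a compact set of dimension $\gamma<d$ is null), which is a valid alternative and makes the argument complete on the full stated range $0<s<2d/\gamma$.
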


\begin{proof}
Assume to the contrary that $\|\hat{\mu}\|_{s}<\infty$ for some $2<s<2d/\gamma$. Then
$$\dashint_{B(0,R)}|\hat{\mu}(\xi)|^2d\xi\le\Big(\dashint_{B(0,R)}|\hat{\mu}(\xi)|^{s} d\xi\Big)^{2/s}\lesssim R^{-2d/s}.$$
This decay in $R\rightarrow\infty$ implies $\gamma\ge 2d/s$ (cf. \cite{wolff}, Corollary 8.7). Since $2d/s>\gamma$, we obtain a contradiction.
\end{proof}

For the endpoint $s=\frac{2d}{\gamma}$ we have

\begin{proposition}\label{reflection}
Let $\mu$ be a Borel probability measure on $\mathbb R^d$ supported on a compact set $K$. Suppose $d/2\le \gamma<d$ and there exists $C\ge 1$ so that
\begin{equation}\label{AD}
C^{-1} r^\gamma\le\mu(B(x,r))\le C r^\gamma
\end{equation}
for all $x\in K$ and $0<r<1$. Then $\|\hat \mu\|_{\frac{2d}{\gamma}}=\infty$.
\end{proposition}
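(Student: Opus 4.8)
The plan is to exploit the reflected measure $\tilde\mu$, defined by $\tilde\mu(A)=\mu(-A)$, together with the autocorrelation $\nu=\mu*\tilde\mu$. Since $\widehat{\tilde\mu}=\overline{\hat\mu}$, one has $\hat\nu=|\hat\mu|^2\ge 0$, whence $\|\hat\mu\|_{2d/\gamma}^{2d/\gamma}=\int_{\mathbb R^d}\hat\nu(\xi)^{d/\gamma}\,d\xi=\|\hat\nu\|_{d/\gamma}^{d/\gamma}$, where $p:=d/\gamma\in(1,2]$ by the hypothesis $d/2\le\gamma<d$. Thus it suffices to show $\hat\nu\notin L^{p}$. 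First I would record that $\nu$ inherits regularity at the origin: writing $\nu(B(0,r))=\int\mu(B(y,r))\,d\mu(y)$ and using \eqref{AD} (with $y$ ranging over $K=\operatorname{supp}\mu$), one gets $\nu(B(0,r))\approx r^\gamma$ for $0<r<1$, while $\nu$ remains compactly supported.

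Next I would establish the two-sided bound $F(R):=\int_{|\xi|\le R}\hat\nu(\xi)\,d\xi\approx R^{d-\gamma}$ for $R\ge 1$. Both inequalities stem from the identity $\int\hat\nu(\xi)\psi(\xi/R)\,d\xi=R^{d}\int\hat\psi(Rx)\,d\nu(x)$. For the upper bound I would take $\psi\ge\mathbbm{1}_{B(0,1)}$ Schwartz and use the rapid decay of $\hat\psi$ together with $\nu(B(0,r))\lesssim r^\gamma$ to get $\int|\hat\psi(Rx)|\,d\nu(x)\lesssim R^{-\gamma}$, hence $F(R)\le\int\hat\nu\,\psi(\cdot/R)\lesssim R^{d-\gamma}$. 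For the lower bound I would instead choose $\psi\ge 0$ with $\hat\psi\ge 0$ and $\hat\psi\ge c$ on $B(0,1)$ (for instance $\psi=\varphi*\tilde\varphi$ with $\varphi\ge 0$); then $\int\hat\nu\,\psi(\cdot/R)\ge cR^{d}\nu(B(0,1/R))\gtrsim R^{d-\gamma}$, and splitting the integral at $|\xi|=\Lambda R$ and absorbing the tail via the upper bound on $F$ already proved yields $F(\Lambda R)\gtrsim R^{d-\gamma}$ for $\Lambda$ large.

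The heart of the matter is that at the endpoint the scaling is critical. The trivial application of H\"older, $F(R)\le\bigl(\int_{|\xi|\le R}\hat\nu^{p}\bigr)^{1/p}|B(0,R)|^{1/p'}$, only gives $\int_{|\xi|\le R}\hat\nu^{p}\gtrsim 1$, which is entirely consistent with $\hat\nu\in L^{p}$; no contradiction can come from cumulative ball estimates alone. To extract a genuine divergence I would pass to fat annuli. Fixing $\Lambda$ so large that $c_1\Lambda^{d-\gamma}>c_2$, where $c_1,c_2$ are the constants in $F(R)\approx R^{d-\gamma}$, the difference $F(\Lambda\rho)-F(\rho)\ge(c_1\Lambda^{d-\gamma}-c_2)\rho^{d-\gamma}\gtrsim\rho^{d-\gamma}$ gives $\int_{\rho\le|\xi|<\Lambda\rho}\hat\nu\gtrsim\rho^{d-\gamma}$. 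Applying H\"older on this annulus, whose volume is $\approx\rho^{d}$, and using the identity $p(d-\gamma)-d(p-1)=0$, yields $\int_{\rho\le|\xi|<\Lambda\rho}\hat\nu^{p}\gtrsim 1$ uniformly in $\rho\ge\Lambda_0$. Since the annuli $\{\Lambda^{k}\le|\xi|<\Lambda^{k+1}\}$ are pairwise disjoint, summing over $k$ gives $\int\hat\nu^{p}\ge\sum_{k}1=\infty$, contradicting $\hat\nu\in L^{p}$.

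I expect the main obstacle to be precisely this endpoint degeneracy: unlike Proposition \ref{average-fourier}, where $s<2d/\gamma$ produces a strictly winning power of $R$, here the upper and lower ball-bounds carry the identical exponent $d-\gamma$, so the contradiction must be manufactured from the logarithmic, summed-over-scales divergence that the fat-annulus device isolates. A secondary technical hurdle is the lower bound on $F$, where the oscillation and sign changes of the physical-side kernel $\hat\psi$ obstruct a direct estimate; this is exactly what forces the use of a test function with nonnegative Fourier transform.
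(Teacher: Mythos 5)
Your proof is correct, but it takes a genuinely different route from the paper's. Both arguments start the same way: pass to the reflected measure and the autocorrelation $\nu=\mu*\tilde\mu$, note $\hat\nu=|\hat\mu|^2\ge0$, and extract from \eqref{AD} the bound $\nu(B(0,r))\gtrsim r^\gamma$ (the paper gets this by packing $\gtrsim\epsilon^{-\gamma}$ disjoint $\epsilon/2$-balls centered in $K$ and summing $\mu(B_j)^2$; your identity $\nu(B(0,r))=\int\mu(B(y,r))\,d\mu(y)$ reaches the same estimate more directly, and also gives the upper bound you need later). From there the paper works on the physical side: assuming $\hat\mu\in L^{2d/\gamma}$, the Hausdorff--Young inequality --- this is precisely where the hypothesis $\gamma\ge d/2$, i.e.\ $d/\gamma\le 2$, enters --- shows that $\nu$ has a density in $L^{(d/\gamma)'}$, and H\"older on $B(0,\epsilon)$ gives $\nu(B(0,\epsilon))\lesssim\|\nu\|_{L^{(d/\gamma)'}(B(0,\epsilon))}\,\epsilon^\gamma$; since this carries exactly the same power $\epsilon^\gamma$ as the lower bound, the contradiction comes not from a power mismatch but from absolute continuity of the integral, which forces $\|\nu\|_{L^{(d/\gamma)'}(B(0,\epsilon))}\to 0$ as $\epsilon\to 0$. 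You work instead entirely on the frequency side, proving the two-sided average bound $\int_{|\xi|\le R}\hat\nu\,d\xi\approx R^{d-\gamma}$ and manufacturing divergence via H\"older on disjoint fat annuli summed over scales. So the two proofs resolve the endpoint degeneracy (which you correctly identify as the crux, and which your remark about cumulative ball estimates explains well) by parallel but distinct devices: vanishing of a localized norm in the paper, versus your logarithmically divergent sum of unit annulus contributions. Your route is longer --- it needs the Parseval identity for measures, a dyadic tail estimate for the upper bound on $F$, and the test function with nonnegative Fourier transform for the lower bound --- but it buys two things the paper's proof does not give: it never invokes Hausdorff--Young, hence needs no restriction $\gamma\ge d/2$ and proves the conclusion for all $0<\gamma<d$ satisfying \eqref{AD}; and it is quantitative, yielding $\int_{|\xi|\le R}|\hat\mu|^{2d/\gamma}\,d\xi\gtrsim\log R$ rather than a bare contradiction.
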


\begin{proof}
Assume to the contrary that $\|\hat \mu\|_{2d/\gamma}<\infty$. Let $\tilde\mu$ be the reflection of $\mu$, i.e. $\tilde\mu(A)=\mu(-A)$ for Borel sets $A$. Then $\widehat{\mu*\tilde\mu}=|\hat\mu|^2\in L^{d/\gamma}$. By the Hausdorff-Young inequality, this implies $\mu*\tilde\mu\in L^{(d/\gamma)'}$, and hence
\begin{equation}\label{ballholder}
\mu*\tilde\mu (B(0,\epsilon))\lesssim \|\mu*\tilde\mu\|_{L^{(d/\gamma)'}(B(0,\epsilon))}\epsilon^\gamma.
\end{equation}
On the other hand, by the upper regularity assumption in \eqref{AD} we can find $N_\epsilon$ many disjoint balls $B_j$ of radius $\epsilon/2$ centered in $K$ with $N_\epsilon\gtrsim \epsilon^{-\gamma}$. Since the difference set $B_j-B_j\subset B(0,\epsilon)$, we have
$$\mu*\tilde\mu(B(0,\epsilon))\gtrsim \sum^{N_\epsilon}_{j=1} \mu(B_j)^2\gtrsim N_\epsilon \epsilon^{2\gamma}\gtrsim\epsilon^{\gamma}$$
where we have used the lower regularity assumption in \eqref{AD} in the second inequality. Compare this with \eqref{ballholder} and notice that $\|\mu*\tilde\mu\|_{L^{(d/\gamma)'}(B(0,\epsilon))}\rightarrow 0$ as $\epsilon\rightarrow 0$, we obtain a contradiction.
\end{proof}

\begin{proposition}\label{knapp}
Let $\mu$ be a Borel probability measure on $\mathbb{R}^d$  supported on a compact set of Hausdorff dimension $0<\gamma\le d$. If \eqref{q-restriction} holds with $1\le p,q\le\infty$, then $q\le\frac{\gamma}{d}p'$.
\end{proposition}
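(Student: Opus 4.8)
The plan is to run a Knapp-type argument: I would test the hypothesized estimate \eqref{q-restriction} against a one-parameter family of modulated bumps, concentrated at scale $\delta$ about an \emph{arbitrary} point, and then convert the resulting uniform ball bound for $\mu$ into a lower bound on $\dim_H(\operatorname{supp}\mu)$ via Frostman's mass distribution principle. The point is that \eqref{q-restriction} imposes an upper regularity bound of the type \eqref{ahlfors} on $\mu$, and such a bound constrains the dimension from below.

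First I would fix $\eta\in\mathcal S(\mathbb R^d)$ with $\hat\eta(0)=1$, so that by continuity $|\hat\eta|\ge 1/2$ on some ball $B(0,c_0)$. For $x_0\in\mathbb R^d$ and $0<\delta<1$ set $f_{x_0,\delta}(x)=e^{2\pi i\langle x_0,x\rangle}\delta^d\eta(\delta x)$, whose Fourier transform is $\widehat{f_{x_0,\delta}}(\xi)=\hat\eta\bigl((\xi-x_0)/\delta\bigr)$. A direct change of variables gives $\|f_{x_0,\delta}\|_{L^p}=\delta^{d/p'}\|\eta\|_{L^p}$, while the lower bound on $\hat\eta$ gives $|\widehat{f_{x_0,\delta}}|\ge 1/2$ on $B(x_0,c_0\delta)$, hence $\|\widehat{f_{x_0,\delta}}\|_{L^q(\mu)}\gtrsim \mu(B(x_0,c_0\delta))^{1/q}$ for $q<\infty$. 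Feeding these into \eqref{q-restriction} and writing $\rho=c_0\delta$, I would obtain, after absorbing the fixed constants, $\mu(B(x_0,\rho))\lesssim \rho^{qd/p'}$ with an implied constant independent of $x_0$ and $\rho$; that is, $\mu$ satisfies \eqref{ahlfors} with exponent $qd/p'$ at every center and every small radius.

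Finally, since $\mu$ is a probability measure supported on the compact set $K$ and the ball bound is uniform in $x_0$, the mass distribution principle (cf. \cite{wolff}) yields $\mathcal H^{qd/p'}(K)\gtrsim\mu(K)=1>0$, whence $\dim_H K\ge qd/p'$. As $\dim_H K=\gamma$, this forces $\gamma\ge qd/p'$, i.e. $q\le\frac{\gamma}{d}p'$, as claimed. The borderline case $q=\infty$ I would treat separately: if $p<\infty$ then $\|f_{x_0,\delta}\|_{L^p}=\delta^{d/p'}\|\eta\|_{L^p}\to 0$ while $\|\widehat{f_{x_0,\delta}}\|_{L^\infty(\mu)}\ge 1/2$ for $x_0\in K$, contradicting \eqref{q-restriction}, so $q=\infty$ can occur only for $p=1$, in which case $\frac{\gamma}{d}p'=\infty\ge q$ trivially.

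I do not expect a substantive obstacle here; the estimate is essentially sharp by construction. The only points requiring care are the bookkeeping of the scaling exponents (so that the exponent of $\rho$ comes out to $qd/p'$) and the clean invocation of the mass distribution principle, which is exactly what the uniform-in-$x_0$ ball bound from the test functions is designed to supply.
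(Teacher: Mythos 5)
Your proof is correct, and the Knapp-type test-function computation at its core ($\|f_{x_0,\delta}\|_{L^p}\sim\delta^{d/p'}$ versus $\|\widehat{f_{x_0,\delta}}\|_{L^q(\mu)}\gtrsim\mu(B(x_0,c_0\delta))^{1/q}$) is exactly the paper's; where you differ is in how the dimension enters, and the two routes are contrapositives of one another. The paper first invokes Billingsley's lemma (\cite{falconer}, Proposition 4.9): since $\dim_H K=\gamma$, for each $\epsilon>0$ there exist a \emph{single} point $x_0$ and radii $r_k\to0$ with $\mu(B(x_0,r_k))\gtrsim r_k^{\gamma+\epsilon}$; it then tests \eqref{q-restriction} only at that point, obtains $r_k^{(\gamma+\epsilon)/q}\lesssim r_k^{d/p'}$, and concludes $q\le\frac{\gamma+\epsilon}{d}p'$ for every $\epsilon$. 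You instead test at \emph{every} center and radius, upgrade \eqref{q-restriction} to the uniform Ahlfors-type bound \eqref{ahlfors} with exponent $qd/p'$, and then apply the mass distribution principle to get $\gamma=\dim_H K\ge qd/p'$ directly. Your organization buys two things: it avoids the $\epsilon$-loss-and-limit bookkeeping entirely, and it isolates a stronger intermediate statement of independent interest (a restriction estimate forces upper ball regularity of the measure), which is in the spirit of the Iosevich--Lu result \cite{iosevich-lu} cited in the introduction. The paper's organization buys economy: one test point suffices, and the case $q=\infty$ is absorbed into the same power comparison rather than needing separate treatment.

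One small correction in your endpoint case $q=\infty$: the contradiction $\delta^{d/p'}\|\eta\|_{L^p}\to0$ requires $d/p'>0$, i.e. $p>1$, not ``$p<\infty$'' as written (for $p=1$ one has $p'=\infty$ and the norm $\delta^{0}\|\eta\|_{L^1}$ does not decay). Your stated conclusion --- that $q=\infty$ forces $p=1$, where $\frac{\gamma}{d}p'=\infty$ makes the claim vacuous --- is the one that follows from the corrected hypothesis, so this is a typo rather than a gap.
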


\begin{proof}
Given $\epsilon>0$, by Billingsley's lemma (cf. \cite{falconer}, Proposition 4.9), there exist $x_0\in\mathbb{R}^d$ and $r_k\rightarrow 0$ such that $\mu(B(x_0,r_k))\gtrsim r^{\gamma+\epsilon}_k, \forall k$. For our purpose, we may assume $x_0=0$. Pick a bump function $\phi$ at $0$ and let $\hat f=\phi(\cdot/r_k)$ in \eqref{q-restriction}, we obtain $r^{(\gamma+\epsilon)/q}_k\lesssim r^{d/p'}_k, \forall k$. Comparing the powers then gives the desired result.
\end{proof}
\section*{Additional remarks}
(i) After submission of this paper, Hambrook and \L aba posted a preprint \cite{hambrook-laba} in which they provide examples of Cantor-type measures for which the range obtained from \cite{bak-seeger} is sharp.\\
\indent(ii) If $\mu$ is as in Corollary \ref{corollary} with compact support, then by Proposition \ref{reflection} it can not have lower regularity as in \eqref{AD} of degree 1/2.\\
\indent(iii) As pointed out by the referee, Corollary \ref{corollary} also follows from
$$\|f\mu*g\mu\|_{L^p(\mathbb R^d)}\le \|\mu*\mu\|_\infty^{1/p'}\|f\|_{L^p(\mu)}\|g\|_{L^p(\mu)},$$
which can be obtained by interpolating the cases $p=1, p=\infty$. See also Bak and McMichael \cite{bak-mcmichael}, Iosevich and Roudenko \cite{iosevich-roudenko}.

\section*{Acknowledgement}
The author would like to thank Andreas Seeger for suggesting this problem and a simplification of the original proof of the theorem which used a generalized coarea formula.

\end{document}